\newtheorem{proposition}{Proposition}[section]
\newtheorem{theorem}[proposition]{Theorem}
\theoremstyle{definition}
\newtheorem{example}[proposition]{Example}
\newtheorem{remark}[proposition]{Remark}
\newcommand{\thlabel}[1]{\label{th:#1}}
\newcommand{\thref}[1]{Theorem~\ref{th:#1}}
\newcommand{\selabel}[1]{\label{se:#1}}
\newcommand{\relabel}[1]{\label{re:#1}}
\newcommand{\reref}[1]{Remark~\ref{re:#1}}
\newcommand{\eqlabel}[1]{\label{eq:#1}}
\newcommand{\equref}[1]{(\ref{eq:#1})}
\def\ot{\otimes}
\newcommand{\Cc}{\mathcal{C}}
\def\*C{{}^*\hspace*{-1pt}{\Cc}}
\def\text#1{{\rm {\rm #1}}}
\begin{document}

\title[Limits of coalgebras, bialgebras and Hopf algebras]{Limits of coalgebras, bialgebras and Hopf algebras}
\dedicatory{Dedicated to the memory of Professor S. Ianu\c s}

\author{A.L. Agore}\thanks{The author acknowledges partial support
from CNCSIS grant 24/28.09.07 of PN II "Groups, quantum groups,
corings and representation theory". }
\address{Department of Mathematics, Academy of Economic Studies,
Piata Romana 6, RO-010374 Bucharest 1,  Romania}
\email{ana.agore@fmi.unibuc.ro}

\keywords{product of coalgebras, bialgebras, Hopf algebras}

\subjclass[2000]{16W30, 18A30, 18A40}

\begin{abstract}
We give the explicit construction of the product of an arbitrary
family of coalgebras, bialgebras and Hopf algebras: it turns out
that the product of an arbitrary family of coalgebras (resp.
bialgebras, Hopf algebras) is the sum of a family of coalgebras
(resp. bialgebras, Hopf algebras). The equalizers of two morphisms
of coalgebras (resp. bialgebras, Hopf algebras) are also described
explicitly. As a consequence the categories of coalgebras,
bialgebras and Hopf algebras are shown to be complete and a
complete description for limits in the above categories is given.
\end{abstract}

\maketitle

\section*{Introduction}
It is well known that the category $k$-Alg of $k$-algebras is
complete and cocomplete: that is any functor $F: I \to k$-Alg has
a limit and a colimit, for all small categories $I$. This is
immediately implied by the existence of products, coproducts,
equalizers and coequalizers in the category $k$-Alg. The
categories of coalgebras, bialgebras or Hopf algebras have
arbitrary coproducts and coequalizers (see \cite[Propositon
1.4.19]{DNR}, \cite[Proposition 2.10]{BW}, \cite[Corollary
2.6.6]{P} and \cite[Remark 2.1, Theorem 2.2]{ag}), hence these
categories are cocomplete. Related to the question of whether
these categories are complete (i.e. if they have arbitrary
products and equalizers) we could not find similar results in the
classical Hopf algebra textbooks (\cite{abe}, \cite{Sw}), not even
in the more recent ones (\cite{BW}, \cite{DNR}). For example,
\cite[Propositon 1.4.21]{DNR} proves only the existence of finite
products (namely the tensor product of coalgebras) and only in the
category of \textit{cocommutative} coalgebras, as a dual result to
the one concerning commutative algebras. Only recently in
\cite[Theorem 9]{HP2} it is proved that the category of coalgebras
or, more generally, the category of corings is locally
presentable, thus they are complete by the definition of locally
presentable categories. However the proof of \cite[Theorem 9]{HP2}
does not construct explicitly the limits (in particular the
products) of an arbitrary family of coalgebras.

In this note we shall fill this gap: using the fact that the
forgetful functor from the category of coalgebras to the category
of vector spaces has a right adjoint, namely the so called cofree
coalgebra, we shall construct explicitly the product of an
arbitrary family of coalgebras. As a consequence, the product of
an arbitrary family of bialgebras and Hopf algebras is
constructed. The equalizers of two morphisms of coalgebras
(bialgebras, Hopf algebras) are also described explicitly. Thus we
shall obtain that the categories of coalgebras, bialgebras and
Hopf algebras are complete.

Throughout this paper, $k$ will be a field. Unless specified
otherwise, all vector spaces, algebras, coalgebras, bialgebras,
tensor products and homomorphisms are over $k$. Our notation for
the standard categories is as follows: ${}_k{\mathcal {M}}$
($k$-vector spaces), $k$-Alg (associative unital $k$-algebras),
$k$-CoAlg (coalgebras over $k$), $k$-BiAlg (bialgebras over $k$),
$k$-HopfAlg (Hopf algebras over $k$), ${\mathcal {M}}^{C}$ (right
$C$-comodules). For a coalgebra $C$, we will use Sweedler's
$\Sigma$-notation, that is, $\Delta(c)= c_{(1)}\ot c_{(2)},~
(I\ot\Delta)\Delta(c)= c_{(1)}\ot c_{(2)}\ot c_{(3)}$, etc. Given
a vector space $V$, $(K(V), p)$ stands for the cofree coalgebra on
$V$, where $K(V)$ is a coalgebra and $p:K(V) \rightarrow V$ is a
$k$-linear map. We refer to \cite{abe}, \cite{DNR}, \cite{Sw} for
further details concerning Hopf algebras. A category $\mathcal{C}$
is called \textit{(co)complete} if any functor $F: I \rightarrow
\mathcal{C}$ has (co)limits, where $I$ is a small category. A
category $\mathcal{C}$ is \textit{(co)complete} if and only if
$\mathcal{C}$ has (co)equalizers of all pairs of arrows and all
(co)products \cite[Theorem 6.10]{par}. Given a morphism $f \in
\mathcal{C}$ we denote by $dom (f)$ and $cod (f)$ the domain,
respectively the codomain of $f$. If $\mathcal{C}$ is a small
category we denote by $Hom(\mathcal{C})$ the set of all morphisms
of $\mathcal{C}$.

\section{Limits for coalgebras, bialgebras and
Hopf algebras}\selabel{2}

First, we explicitly construct the product of an arbitrary family
of coalgebras.

\begin{theorem}\thlabel{1}
The category $k$-CoAlg of coalgebras has arbitrary products and
equalizers. In particular, the category $k$-CoAlg of coalgebras is
complete.
\end{theorem}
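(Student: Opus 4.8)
The plan is to construct products and equalizers in $k$-CoAlg separately, since by \cite[Theorem 6.10]{par} these two ingredients suffice for completeness. For products, the key tool is the adjunction between the forgetful functor $U: k\text{-CoAlg} \to {}_k{\mathcal M}$ and the cofree coalgebra functor $K: {}_k{\mathcal M} \to k\text{-CoAlg}$. First I would recall that right adjoints preserve limits, but that alone does not hand us the product in $k$-CoAlg; instead I would argue directly as follows. Given a family $(C_i)_{i\in I}$ of coalgebras, form the vector space product $V = \prod_{i\in I} C_i$ with its canonical projections $\pi_i: V \to C_i$, and let $(K(V), p)$ be the cofree coalgebra on $V$. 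For each $i$, the composite $p$ followed by $\pi_i$ is a linear map $K(V) \to C_i$, which by the universal property of the cofree coalgebra corresponds to a unique coalgebra morphism; but this is not yet a cone over $(C_i)$. The correct object is obtained by taking the \emph{largest subcoalgebra} of $K(V)$ on which these data are compatible — concretely, one shows that the relevant equalizer-type subcoalgebra of $K(V)$, equipped with the induced maps into each $C_i$, satisfies the universal property of the product. I would verify the universal property by a diagram chase: any cone $(D, f_i: D \to C_i)$ induces a linear map $D \to V$, hence a coalgebra map $D \to K(V)$ by cofreeness, which is then shown to factor through the chosen subcoalgebra, uniquely.

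For equalizers, given two coalgebra morphisms $f, g: C \to D$, I would set $E$ to be the \emph{sum of all subcoalgebras $C'$ of $C$ on which $f$ and $g$ agree}, i.e. $E = \sum\{\, C' \subseteq C : C' \text{ a subcoalgebra}, f|_{C'} = g|_{C'}\,\}$. This $E$ is again a subcoalgebra of $C$ (sums of subcoalgebras are subcoalgebras), and $f|_E = g|_E$. For the universal property, if $h: C'' \to C$ is a coalgebra morphism with $fh = gh$, then the image $h(C'')$ is a subcoalgebra of $C$ on which $f$ and $g$ agree (using that the image of a coalgebra morphism is a subcoalgebra), hence $h(C'') \subseteq E$, so $h$ factors through the inclusion $E \hookrightarrow C$, uniquely since that inclusion is mono. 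This is essentially the dual of the construction of coequalizers of algebra morphisms via quotient by the ideal generated by the differences.

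The main obstacle I anticipate is the product construction: making precise which subcoalgebra of $K(V)$ serves as $\prod C_i$, and proving that the induced maps are genuine coalgebra morphisms into the $C_i$ rather than merely linear maps. The subtlety is that the linear projection $\pi_i \circ p: K(V) \to C_i$ need not be a coalgebra map on all of $K(V)$; one must pass to the subcoalgebra where all the coherence conditions (compatibility of the $K(V)$-comultiplication with the would-be projections) hold simultaneously. I expect this to require an equalizer-in-vector-spaces argument \emph{inside} $K(V)$: namely, the product is the equalizer (taken in $k$-CoAlg, which we are simultaneously trying to build — so one must instead characterize it as the largest subcoalgebra contained in a suitable linear subspace) of the two maps $K(V) \rightrightarrows \prod_i K(C_i)$ arising from $\Delta_{K(V)}$ and from the $p$-transposes. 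Handling this bootstrapping carefully — perhaps by first establishing a lemma that every linear subspace of a coalgebra contains a largest subcoalgebra — is where the real work lies; the abstract statement that $k$-CoAlg is complete then follows formally by combining the two constructions with \cite[Theorem 6.10]{par}.
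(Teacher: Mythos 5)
Your proposal matches the paper's proof in essentially every respect: equalizers are obtained as the sum of all subcoalgebras on which $f$ and $g$ agree (equivalently, the largest subcoalgebra contained in the subspace $\{c : f(c)=g(c)\}$), and the product of $(C_i)_{i\in I}$ is the largest subcoalgebra $D$ of the cofree coalgebra $K\bigl(\prod_i C_i\bigr)$ on which every composite $\pi_i \circ p$ restricts to a coalgebra map, with the universal property verified by exactly the diagram chase you describe. The ``bootstrapping'' worry you raise is resolved just as you suggest --- the paper takes $D$ to be the sum of all subcoalgebras with the required (linear) property, which is automatically the largest such subcoalgebra --- so your outline is correct and follows the same route.
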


\begin{proof}
Let $f$, $g : C \rightarrow D$ be two coalgebra maps and $S := \{c
\in C \, | \, f(c) = g(c) \, \}$, which is a $k$-subspace of $C$.
Let $E$ be the sum of all subcoalgebras of $C$ included in $S$.
Note that the family of subcoalgebras of $C$ included in $S$ is
not empty since it contains the null coalgebra. It is immediate
that $E$ is a subcoalgebra of $C$. We shall prove that $(E,i)$ is
the equalizer of the pair $(f,g)$ in $k$-CoAlg, where $i: E
\rightarrow C$ is the canonical inclusion. Let $E'$ be a coalgebra
and $h: E' \rightarrow C$ a coalgebra map such that $f \circ h = g
\circ h$. Then $f\bigl(h(x)\bigl) = g\bigl(h(x)\bigl)$, for all $x
\in E'$, hence $h(E') \in S$ and since $h(E')$ is a subcoalgebra
in $C$ we obtain $h(E') \subseteq E$. Thus there exists a unique
coalgebra map $h:E' \rightarrow E$ such that $i \circ h = h$.
Hence $(E,i)$ is the equalizer of the pair $(f,g)$ in the category
$k$-CoAlg of coalgebras.

Now let $(C_{i})_{i \in I}$ be a family of coalgebras and
$\bigl(\prod_{i \in I}C_{i}, (\pi_{i})_{i \in I}\bigl)$ be the
product of the $k$-modules $(C_{i})_{i \in I}$. Let
$\bigl(K(\prod_{i \in I}C_{i}), p\bigl)$ be the cofree coalgebra
over the vector space $\prod_{i \in I}C_{i}$.

Let $D$ be the sum of all subcoalgebras $E$ of $K\bigl(\prod_{i
\in I}C_{i}\bigl)$ such that $\pi_{i} \circ p \circ j_{E}$ is a
coalgebra map for all $i \in I$, where $j_{E}: E \rightarrow
K(\prod_{i \in I}C_{i})$ is the canonical inclusion. The family of
subcoalgebras of $E$ satisfying this property is nonempty since it
contains the null coalgebra. The $k$-linear map $\pi_{i} \circ p
\circ j : D \rightarrow C_{i}$ is a coalgebra map for all $i \in
I$ where $j: D \rightarrow K(\prod_{i \in I}C_{i})$ is the
canonical inclusion. We shall prove that $\bigl(D, (\pi_{i} \circ
p \circ j)_{i \in I}\bigl)$ is the product of the family of
coalgebras $(C_{i})_{i \in I}$ in $k$-CoAlg.

Indeed, let $D'$ be a coalgebra and $g_{i}: D' \rightarrow C_{i}$,
$i \in I$, a family of coalgebra maps. Using the universal
property of the product in ${}_k{\mathcal {M}}$ we obtain that
there exists a unique $k-$linear map $\overline{g}: D' \rightarrow
\prod_{i \in I}C_{i}$ such that $\pi_{i} \circ \overline{g} =
g_{i}$, for all $i \in I$. Furthermore, since $\bigl(K(\prod_{i
\in I}C_{i}), p\bigl)$ is the cofree coalgebra over the $k-$module
$\prod_{i \in I}C_{i}$, there exists a unique coalgebra map
$\overline{f}: D' \rightarrow K(\prod_{i \in I}C_{i})$ such that
$p \circ \overline{f} = \overline{g}$. Thus we have the following
commutative diagram:
$$
\xymatrix{ D' \ar[dr]_{\overline{f}}
\ar[drr]_{\overline{g}} \ar[drrr]^{g_{i}}\\
D\ar[r]_{j} & K\bigl(\prod_{i \in I}C_{i}\bigl) \ar[r]_{p} &
{\prod_{i \in I}C_{i}} \ar[r]_{\pi_{i}} & C_{i}}
$$
It follows that $(\pi_{i} \circ p)(\overline{f}(x)) = g_{i}(x)$,
for all $x \in D'$ and $i \in I$. So, since each $g_{i}$ is a
coalgebra map, we have $\overline{f}(D') \subseteq D$. Hence, we
proved that for any coalgebra $D'$ and any family $g_{i}: D'
\rightarrow C_{i}$, $i \in I$, of coalgebra maps there exists a
coalgebra map $\overline{f} : D' \rightarrow D$ such that
$(\pi_{i} \circ p \circ j) \circ \overline{f} = g_{i}$, for all $i
\in I$. Let $h: D'\rightarrow D$ be another coalgebra map such
that $(\pi_{i} \circ p \circ j) \circ h = g_{i}$ for all $i \in
I$. From the uniqueness of $\overline{g}$ we obtain $p \circ j
\circ h = \overline{g}$. Moreover, from the uniqueness of
$\overline{f}$ we obtain $j \circ h = \overline{f}$, hence $h =
\overline{f}$. Thus $\bigl(D, (\pi_{i} \circ p \circ j)_{i \in
I}\bigl)$ is the product of the family $(C_{i})_{i \in I}$ in the
category $k$-CoAlg of coalgebras.
\end{proof}

\begin{remark}\relabel{1}
1) In \cite[Lemma 1.1.3]{AD} a description for the equalizers in
the category $k$-HopfAlg is given. We can use the same method in
order to obtain another description for the equalizer of a pair of
coalgebra (or bialgebra) maps. Let $f$, $g : C \rightarrow D$ be
two coalgebra maps. It can be easily proved that $(E, i)$ is the
equalizer of the pair $(f,g)$ in the category $k$-CoAlg of
coalgebras, where
$$
E = \{ c\in C \, | \, c_{(1)} \ot f (c_{(2)}) \ot c_{(3)} =
c_{(1)} \ot g (c_{(2)}) \ot c_{(3)} \, \}
$$
and $i : E \to C$ is the canonical inclusion. This equivalent
description of equalizers in the category $k$-CoAlg will turn out
to be more efficient for computations.

\end{remark}

\begin{example}
Let $G$ be a multiplicative group and $kG$ the $k$-vector space
with basis $\{g | g \in G\}$ endowed with the classical coalgebra
structure : $\Delta(g) = g \otimes g$ and $\varepsilon(g) = 1$ for
all $g \in G$. Thus any element $x \in kG$ has the form $x =
\sum_{g \in G} k_{g}g$ where $(k_{g})_{g \in G}$ is a family of
elements in $k$ with only a finite number of non-zero elements. We
use the following formal notation $x^{-1} := \sum_{g \in G}
k_{g}g^{-1}$ and $0^{-1} = 0$.\\
Consider the coalgebra maps $f = Id_{kG}$ and $h: kG \rightarrow
kG$ given by $h(g) = g^{-1}$ for all $g \in G$. Then, in the light
of the above remark, it follows that the equalizer of the
morphisms $(f,g)$ is given by the pair $(E,i)$ where $E = \{x \in
kG | x \otimes x \otimes x = x \otimes x^{-1} \otimes x\}$ and $i$
is the canonical inclusion.
\end{example}

As an easy consequence of \cite[Chapter 5 \S2, Theorem 1]{ML} we
obtain the following description for limits in the category
$k$-CoAlg of coalgebras:

\begin{remark}
Let $J$ be a small category, $F: J \rightarrow$$k$-CoAlg be a
functor, $\bigl(\Pi_{j \in J} F(j),$ $(p_{j})_{j \in J} \bigl)$,
$\bigl(\Pi_{u \in Hom(J)} F(cod (u)), (p_{u})_{u \in Hom(J)}
\bigl)$ be the product in $k$-CoAlg of the families
$\bigl(F(j)\bigl)_{j \in J}$, respectively $\bigl(F(cod
(u))\bigl)_{u \in Hom(J)}$ and $f, g : \Pi_{j \in J} F(j)
\rightarrow \Pi_{u \in Hom(J)} F(cod (u))$ be the unique coalgebra
maps such that $p_{u} \circ f = p_{cod (u)}$ and $p_{u} \circ g =
F(u) \circ p_{dom (u)}$ for all $u \in Hom(J)$. We define
$$D = \{ x \in \Pi_{j \in J} F(j) | x_{(1)} \ot f (x_{(2)}) \ot
x_{(3)} = x_{(1)} \ot g (x_{(2)}) \ot x_{(3)} \}.$$ Then the pair
$\bigl(D, (\varphi_{j} = p_{j} \circ e)_{j \in J}\bigl)$ is the
limit of the functor $F$, where $e: D \rightarrow \Pi_{j \in J}
F(j)$ is the canonical inclusion.
\end{remark}

In what follows we will make use of \thref{1} in order to
construct the product in the category of $k$-BiAlg of bialgebras.

\begin{theorem}\thlabel{2}
The category $k$-BiAlg of bialgebras has arbitrary products and
equalizers. In particular, the category $k$-BiAlg of bialgebras is
complete.
\end{theorem}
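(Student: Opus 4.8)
The plan is to mimic the construction for coalgebras in \thref{1}, now working inside the category $k$-CoAlg itself where we already have products and equalizers. For the product, let $(B_i)_{i \in I}$ be a family of bialgebras. First I would form the product $\bigl(P, (\pi_i)_{i \in I}\bigl)$ of the underlying coalgebras $(B_i)_{i \in I}$ in $k$-CoAlg, which exists by \thref{1}. The coalgebra $P$ carries no algebra structure a priori; instead I would look for the largest sub-bialgebra inside $P$. Concretely, $P \otimes P$ is again a coalgebra, and for each $i$ the two composites $B_i \otimes B_i \xrightarrow{m_{B_i}} B_i$ and, via $\pi_i \otimes \pi_i$ then multiplication in... — rather, the cleaner route: consider the two coalgebra maps $P \otimes P \to B_i$ given by $m_{B_i} \circ (\pi_i \otimes \pi_i)$ on one hand; we want a multiplication $m_P : P \otimes P \to P$ compatible with all $\pi_i$. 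Since $P \otimes P$ is a coalgebra and each $\pi_i \otimes \pi_i$ followed by $m_{B_i}$ is a coalgebra map into $B_i$, the universal property of the product $P$ in $k$-CoAlg yields a unique coalgebra map $m_P : P \otimes P \to P$ with $\pi_i \circ m_P = m_{B_i} \circ (\pi_i \otimes \pi_i)$; similarly the unit maps $k \to B_i$ factor through a unique coalgebra map $u_P : k \to P$. The associativity and unit axioms for $m_P, u_P$ then follow because, after composing with each $\pi_i$, both sides become the (associative, unital) structure maps of $B_i$, and $P$ separates points in the categorical sense — i.e. two coalgebra maps into $P$ agreeing after every $\pi_i$ are equal. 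Compatibility of $\Delta_P, \varepsilon_P$ with $m_P, u_P$ is checked the same way. Thus $(P, m_P, u_P, \Delta_P, \varepsilon_P)$ is a bialgebra, each $\pi_i$ is a bialgebra map, and its universal property as a product in $k$-BiAlg is inherited directly from its universal property in $k$-CoAlg (a family of bialgebra maps $g_i : B' \to B_i$ is in particular a family of coalgebra maps, inducing a unique coalgebra map $B' \to P$, which one checks is multiplicative and unital by the same separation argument).

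For equalizers, let $f, g : B \to C$ be two bialgebra maps. I would take $E$ to be the equalizer of $f, g$ regarded as \emph{coalgebra} maps, which exists by \thref{1}; recall from \reref{1} that $E = \{\, b \in B \mid b_{(1)} \ot f(b_{(2)}) \ot b_{(3)} = b_{(1)} \ot g(b_{(2)}) \ot b_{(3)} \,\}$, the sum of all subcoalgebras of $B$ on which $f$ and $g$ agree. The claim is that $E$ is automatically a sub-bialgebra of $B$. For this it suffices to show $E$ is closed under multiplication and contains $1_B$: since $1_B$ spans a one-dimensional subcoalgebra on which $f$ and $g$ agree (both are unital), $1_B \in E$; and if $x, y \in E$ then the subcoalgebra generated by $xy$ is contained in the product (inside $B$) of the subcoalgebras generated by $x$ and by $y$ — more precisely one verifies directly from the displayed description that $xy$ again satisfies $(xy)_{(1)} \ot f((xy)_{(2)}) \ot (xy)_{(3)} = (xy)_{(1)} \ot g((xy)_{(2)}) \ot (xy)_{(3)}$, using that $f, g$ are algebra maps and that $x, y$ lie in $E$. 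Hence $E$ with the restricted structure is a bialgebra and $i : E \to B$ a bialgebra map; the universal property in $k$-BiAlg is then immediate, since any bialgebra map $h : E' \to B$ with $f h = g h$ is in particular a coalgebra map equalizing $f, g$, hence factors uniquely through $E$ as a coalgebra map, and this factorization is automatically a bialgebra map because $i$ is a mono.

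Having products and equalizers in $k$-BiAlg, completeness follows from \cite[Theorem 6.10]{par}, exactly as in \thref{1}.

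I expect the main obstacle to be the verification that the multiplication $m_P$ on the product coalgebra is associative and unital, and dually that $E$ is closed under the algebra operations of $B$. In both cases the point is the same: one must carefully deploy the universal (separation) property of the coalgebra product, respectively the explicit subcoalgebra description of the coalgebra equalizer, to reduce bialgebra axioms in $P$ (or membership in $E$) to the already-known axioms in the $B_i$ (or in $B$). These reductions are routine but need the right formulation to avoid circularity, and the computation showing $xy \in E$ when $x, y \in E$ — expanding $\Delta^2(xy)$ and matching the $f$- and $g$-tagged tensor legs — is the one genuinely hands-on step.
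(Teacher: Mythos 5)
Your proposal is correct and follows essentially the same route as the paper: induce $m_P$ and $u_P$ on the coalgebra product via its universal property, verify the bialgebra axioms by composing with the projections $\pi_i$ and invoking uniqueness, and take the coalgebra equalizer and check it is a sub-bialgebra. The only cosmetic difference is in the equalizer step, where you verify multiplicative closure by a direct Sweedler computation with the description of \reref{1}, while the paper observes that $m_B(D\ot D)$ is a subcoalgebra contained in $\{b \mid f(b)=g(b)\}$ and hence lies in $D$ — two equivalent arguments for the same fact.
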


\begin{proof}
Let $\bigl(B_{i}, m_{i}, \eta_{i}, \Delta_{i},
\varepsilon_{i}\bigl)_{i \in I}$ be a family of bialgebras and
$\bigl((\prod_{i \in I}B_{i}, \Delta, \varepsilon), (\pi_{i})_{i
\in I}\bigl)$ be the product of this family in the category
$k$-CoAlg of coalgebras. Since $(B_{i}, m_{i}, \eta_{i},
\Delta_{i}, \varepsilon_{i})$ is a bialgebra it follows that
$m_{i}: B_{i} \ot B_{i} \rightarrow B_{i}$ and $\eta_{i}:k
\rightarrow B_{i}$ are coalgebra maps for all $i \in I$.
\newline Then there exists a unique coalgebra map $\eta : k \rightarrow \prod_{i \in
I}B_{i}$ such that the following diagram :
\begin{equation}\eqlabel{4}
\xymatrix {& k\ar[d]_{\eta}\ar[dr]^{\eta_{i}}\\
 & {\prod_{i \in
I}B_{i}} \ar[r]^{\pi_{i}} & B_{i}}\end{equation} is commutative
for all $i \in I$. Also there exists a unique coalgebra map $m:
\prod_{i \in I}B_{i} \ot \prod_{i \in I}B_{i} \rightarrow \prod_{i
\in I}B_{i}$ for which the diagram :
\begin{equation}\eqlabel{5}
\xymatrix {& {\prod_{i \in I}B_{i}} \otimes {\prod_{i \in
I}B_{i}} \ar[d]_{m}\ar[dr]^{m_{i}\circ(\pi_{i} \otimes \pi_{i})}\\
&{\prod_{i \in I}B_{i}} \ar[r]^{\pi_{i}} & B_{i}}
\end{equation}
is commutative for all $i \in I$.
\newline First, we will prove that $(\prod_{i \in I}B_{i}, m, \eta)$ is a
$k$-algebra. Since $\pi_{i} \circ m \circ (m \ot Id)$ is a
coalgebra map by the universal property of the product we obtain
that there exists a unique coalgebra map $\psi : \bigl(\prod_{i
\in I}B_{i}\bigl)^{\ot 3} \rightarrow \prod_{i \in I}B_{i}$ such
that the following diagram:
$$
\xymatrix {& {\bigl(\prod_{i \in I}B_{i}\bigl)}^{\otimes 3}
\ar[d]_{\psi}\ar[dr]^{\pi_{i} \circ m\circ(m \otimes Id)}\\
 & {\prod_{i \in I}B_{i}} \ar[r]^{\pi_{i}} & B_{i}}
$$
is commutative for all $i \in I$. It is easy to see that the
coalgebra map $m \circ (m \ot Id)$ makes the above diagram
commute. Thus, using the uniqueness of $\psi$, in order to prove
that $m \circ (m \ot Id) = m \circ (Id \ot m)$ it is enough to
show that $\pi_{i} \circ m \circ (m \ot Id) = \pi_{i} \circ m
\circ (Id \ot m)$ for all $i \in I$. We have :
\begin{eqnarray*}
\pi_{i} \circ m \circ (m \ot Id) &\stackrel{\equref{5}} {=}& m_{i}
\circ (\pi_{i} \ot \pi_{i}) \circ (m \ot Id)\\
&{=}& m_{i} \circ \bigl((\pi_{i} \circ m) \ot \pi_{i}\bigl)\\
&\stackrel{\equref{5}} {=}&
m_{i} \circ \bigl[\big(m_{i} \circ (\pi_{i} \ot \pi_{i})\bigl) \ot \pi_{i}\bigl]\\
&{=}& m_{i} \circ (m_{i} \ot Id) \circ (\pi_{i} \ot \pi_{i} \ot
\pi_{i})\\
&{=}& m_{i} \circ (Id \ot m_{i}) \circ (\pi_{i} \ot \pi_{i} \ot
\pi_{i})\\
&{=}& m_{i} \circ \bigl[\pi_{i} \ot \bigl(m_{i} \circ (\pi_{i}
 \ot \pi_{i})\bigl)\bigl]\\
&\stackrel{\equref{5}} {=}& m_{i} \circ \bigl(\pi_{i} \ot (\pi_{i}
\circ m) \bigl)\\
&{=}& m_{i} \circ (\pi_{i} \ot \pi_{i}) \circ (Id \ot m)\\
&\stackrel{\equref{5}} {=}& \pi_{i} \circ m \circ (Id \ot m)
\end{eqnarray*}
Hence $m \circ (m \ot Id) = m \circ (Id \ot m)$, i.e. $m$ is
associative.

Consider now the coalgebra map $\pi_{i} \circ m \circ (\eta \ot
Id)$. From the universal property of the product, we obtain that
there exists a unique coalgebra map $\varphi: k \ot \prod_{i \in
I}B_{i} \rightarrow B_{i}$ such that the following diagram :
$$
\xymatrix {& k \otimes {\prod_{i \in I}B_{i}}
\ar[d]_{\varphi}\ar[dr]^{\pi_{i} \circ m\circ(\eta \otimes Id)}\\
& {\prod_{i \in I}B_{i}} \ar[r]^{\pi_{i}} & B_{i}}
$$
is commutative for all $i \in I$. By the argument above, in order
to prove that $m \circ (\eta \ot Id) = s$ it will be enough to
show that $\pi_{i} \circ m \circ (\eta \ot Id) = \pi_{i} \circ s$,
where we denote by $s$ the scalar multiplication. We have:
\begin{eqnarray*}
\pi_{i} \circ m \circ (\eta \ot Id)&\stackrel{\equref{5}} {=}&
m_{i} \circ (\pi_{i} \ot \pi_{i}) \circ (\eta \ot Id)\\
&{=}& m_{i} \circ \bigl((\pi_{i} \circ \eta) \ot \pi_{i}\bigl)\\
&\stackrel{\equref{4}} {=}& m_{i} \circ (\eta_{i} \ot \pi_{i})\\
&{=}& m_{i} \circ (\eta_{i} \ot Id) \circ (Id \ot \pi_{i})\\
&{=}& s \circ (Id \ot \pi_{i})
\end{eqnarray*}
Furthermore, let $k_{1} \ot b \in k\ot \prod_{i \in I}B_{i}$.
Having in mind that $\pi_{i}$ is a $k$-linear map we obtain :
\begin{eqnarray*}
s \circ (Id \ot \pi_{i})(k_{1} \ot b) &{=}& k_{1} \pi_{i}(b)\\
&{=}& \pi_{i}(k_{1}b)\\
&{=}& \pi_{i} \circ s(k_{1} \ot b)
\end{eqnarray*}
Thus we proved that $\pi_{i} \circ m \circ (\eta \ot Id) = \pi_{i}
\circ s$. In the same way it follows that $m \circ (Id \ot \eta) =
s$. Hence $(\prod_{i \in I}B_{i}, m, \eta)$ is an algebra and
since $m$ and $\eta$ are coalgebra maps we obtain that $(\prod_{i
\in I}B_{i}, m, \eta, \Delta, \varepsilon)$ is actually a
bialgebra.

To end the proof we still need to show that $(\prod_{i \in
I}B_{i}, m, \eta, \Delta, \varepsilon)$ is the product of the
family $\bigl(B_{i}, m_{i}, \eta_{i}, \Delta_{i},
\varepsilon_{i}\bigl)_{i \in I}$ in the category $k$-BiAlg. Let
$(B, m_{B}, \eta_{B}, \Delta_{B}, \varepsilon_{B})$ be a bialgebra
and $(g_{i})_{i \in I}$ be a family of bialgebra maps, $g_{i}:B
\rightarrow B_{i}$ for all $i \in I$. From the universal property
of the product, we obtain that there exists an unique coalgebra
map $\theta: B \rightarrow \prod_{i \in I}B_{i}$ such that the
following diagram commutes :
\begin{equation}\eqlabel{6}
\xymatrix {& B\ar[d]_{\theta}\ar[dr]^{g_{i}}\\
& {\prod_{i \in I}B_{i}} \ar[r]^{\pi_{i}} & B_{i}}
\end{equation}
We only need to prove that $\theta$ is also an algebra map. By the
argument used above, it is enough to show that:
\begin{equation}\eqlabel{7}
\pi_{i} \circ \theta \circ m_{B} = \pi_{i} \circ m \circ (\theta
\ot \theta) \quad {\rm ~and~} \quad \pi_{i} \circ \theta \circ
\eta_{B} =\pi_{i} \circ \eta
\end{equation}
Having in mind that $g_{i}$ is an algebra map, we have:
\begin{eqnarray*}
\pi_{i} \circ m \circ (\theta \ot \theta)&\stackrel{\equref{5}}
{=}& m_{i} \circ (\pi_{i} \ot \pi_{i}) \circ (\theta \ot \theta)\\
&{=}& m_{i} \circ \bigl((\pi_{i} \circ \theta) \ot (\pi_{i} \circ
\theta)\bigl)\\
&\stackrel{\equref{6}} {=}& m_{i} \circ (g_{i} \ot g_{i})\\
&{=}& g_{i} \circ m_{B}\\
&\stackrel{\equref{6}} {=}&\pi_{i} \circ \theta \circ m_{B}
\end{eqnarray*}
Moreover, $\pi_{i} \circ \theta \circ \eta_{B}
\stackrel{\equref{6}} {=} g_{i} \circ \eta_{B} = \eta_{i}
\stackrel{\equref{4}} {=} \pi_{i} \circ \eta$ hence \equref{7}
holds.

In what follows we construct equalizers. Let
$(A,m_{A},\eta_{A},\Delta_{A},\varepsilon_{A})$, $(B,m_{B},
\eta_{B},\Delta_{B},\varepsilon_{B})$ be two bialgebras and $f$,
$g: B \rightarrow A$ be two bialgebra maps. We denote by $S:=\{b
\in B | f(b)=g(b)\}$. Let $D$ be the sum of all subcoalgebras of
$B$ contained in $S$. We already noticed before that the family of
subcoalgebras of $B$ with this property is nonempty and that $D$
is a subcoalgebra of $B$. The pair $(D, i)$ is the equalizer of
the morphisms $(f,g)$ in $k$-BiAlg, where $i:D \rightarrow B$ is
the canonical inclusion. We only need to prove that $D$ is
actually a subbialgebra of $B$. Consider $q = \sum_{k=1}^{n}
d_{i_{k}} \ot d_{j_{k}} \in D \ot D$. We then have:
\begin{eqnarray*}
m_{A} \circ (f \ot f)(q) &{=}& m_{A} \bigl(\sum_{k=1}^{n}
f(d_{i_{k}}) \ot f(d_{j_{k}})\bigl)\\
&{=}&m_{A} \bigl(\sum_{k=1}^{n} g(d_{i_{k}}) \ot
g(d_{j_{k}})\bigl)\\
&{=}& m_{A} \circ (g \ot g)(q)
\end{eqnarray*}
Now having in mind that $f$ and $g$ are algebra maps we obtain
$f\bigl(m_{B}(D \ot D)\bigl) = g\bigl(m_{B}(D \ot D)\bigl)$, hence
$m_{B}(D \ot D) \subseteq S$ and since $m_{B}(D \ot D)$ is a
subcoalgebra it follows that $m_{B}(D \ot D) \subseteq D$. Thus
$D$ is a subbialgebra of $B$ and it can be shown as in \thref{1}
that the pair $(D, i)$ is the equalizer of the morphisms $(f, g)$
in $k$-BiAlg.
\end{proof}

As remarked before, we can obtain a description for the equalizers
in $k$-BiAlg similar to the one in \reref{1}. Thus, we have the
following description for limits in $k$-BiAlg:

\begin{remark}
Let $J$ be a small category, $F: J \rightarrow$$k$-BiAlg be a
functor, $\bigl(\Pi_{j \in J} F(j),$ $(p_{j})_{j \in J} \bigl)$,
$\bigl(\Pi_{u \in Hom(J)} F(cod (u)), (p_{u})_{u \in Hom(J)}
\bigl)$ be the product in $k$-BiAlg of the families
$\bigl(F(j)\bigl)_{j \in J}$, respectively $\bigl(F(cod
(u))\bigl)_{u \in Hom(J)}$ and $f, g : \Pi_{j \in J} F(j)
\rightarrow \Pi_{u \in Hom(J)} F(cod (u))$ be the unique bialgebra
maps such that $p_{u} \circ f = p_{cod (u)}$ and $p_{u} \circ g =
F(u) \circ p_{dom (u)}$ for all $u \in Hom(J)$. We define
$$D = \{ x \in \Pi_{j \in J} F(j) | x_{(1)} \ot f (x_{(2)}) \ot
x_{(3)} = x_{(1)} \ot g (x_{(2)}) \ot x_{(3)} \}.$$ Then the pair
$\bigl(D, (\varphi_{j} = p_{j} \circ e)_{j \in J}\bigl)$ is the
limit of the functor $F$, where $e: D \rightarrow \Pi_{j \in J}
F(j)$ is the canonical inclusion.
\end{remark}

\begin{theorem}\thlabel{4}
The category $k$-HopfAlg of Hopf algebras has arbitrary products
and equalizers. In particular, the category $k$-HopfAlg of Hopf
algebras is complete.
\end{theorem}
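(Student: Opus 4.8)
The plan is to follow the two–step pattern of \thref{1} and \thref{2}: carry out the constructions in $k$-BiAlg, which is available by \thref{2}, and then pass to the Hopf locus, using that a morphism of Hopf algebras is precisely a morphism of the underlying bialgebras. By \cite[Theorem 6.10]{par} it is enough to produce equalizers and arbitrary products.

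For equalizers I would argue verbatim as in \thref{2}. Given Hopf algebra maps $f,g\colon H\to H'$, set $S:=\{h\in H\mid f(h)=g(h)\}$ and let $D$ be the sum of all subcoalgebras of $H$ contained in $S$; by \thref{2} this $D$ is a subbialgebra of $H$ and $(D,i)$, with $i$ the inclusion, is the equalizer of $(f,g)$ in $k$-BiAlg. The only new point is that $D$ is stable under the antipode $S_H$: for $d\in D$ one has $f\bigl(S_H(d)\bigr)=S_{H'}\bigl(f(d)\bigr)=S_{H'}\bigl(g(d)\bigr)=g\bigl(S_H(d)\bigr)$, so $S_H(D)\subseteq S$, and $S_H(D)$ is a subcoalgebra of $H$ (the image of a subcoalgebra under the coalgebra map $S_H\colon H\to H^{\mathrm{cop}}$), whence $S_H(D)\subseteq D$. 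Thus $D$ is a Hopf subalgebra and $(D,i)$ is the equalizer of $(f,g)$ in $k$-HopfAlg; as in \reref{1} one also gets the Sweedler-type description of this equalizer.

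For products, let $(H_i)_{i\in I}$ be a family of Hopf algebras and let $\bigl(B,(\pi_i)_{i\in I}\bigr)$ be the product of the underlying bialgebras, which exists by \thref{2}. Here the pattern of \thref{2} does \emph{not} go through directly: the natural candidate antipode $S\colon B\to B$ produced by the universal property (each $S_i\colon H_i^{\mathrm{op,cop}}\to H_i$ is a bialgebra map, and $(-)^{\mathrm{op,cop}}$ commutes with products since it is an automorphism of $k$-BiAlg) satisfies $\pi_i\circ\bigl(m_B\circ(\mathrm{id}_B\ot S)\circ\Delta_B\bigr)=\pi_i\circ(\eta_B\circ\varepsilon_B)$ for all $i$, but neither of these two endomorphisms of $B$ is a coalgebra map, so their equality cannot be deduced from the universal property, and in general $B$ is not a Hopf algebra. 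Instead I would take $\widehat H$ to be the sum of all Hopf subalgebras of $B$ and show $\bigl(\widehat H,(\pi_i|_{\widehat H})_{i\in I}\bigr)$ is the product in $k$-HopfAlg. Assuming (see below) that $\widehat H$ is itself a Hopf subalgebra of $B$, this is short: each $\pi_i|_{\widehat H}$ is a morphism of bialgebras between Hopf algebras, hence a morphism in $k$-HopfAlg; and for a Hopf algebra $G$ with morphisms $g_i\colon G\to H_i$ in $k$-HopfAlg, \thref{2} gives a unique bialgebra map $\theta\colon G\to B$ with $\pi_i\circ\theta=g_i$, whose image $\theta(G)$ is a quotient bialgebra of the Hopf algebra $G$ and hence a Hopf subalgebra of $B$; thus $\theta(G)\subseteq\widehat H$, so $\theta$ corestricts to the desired morphism $G\to\widehat H$, unique by the uniqueness part of \thref{2}.

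The heart of the matter — and the step I expect to be the main obstacle — is the assertion that the sum $\widehat H$ of all Hopf subalgebras of an arbitrary bialgebra $B$ is again a Hopf subalgebra, i.e. that $B$ possesses a largest Hopf subalgebra. I would establish this as follows. First, the antipodes of any two Hopf subalgebras $H'$, $H''$ of $B$ agree on the subbialgebra $H'\cap H''$: both restrictions are convolution inverses of the inclusion $H'\cap H''\hookrightarrow B$ in $\Hom_k(H'\cap H'',B)$, and a convolution inverse is unique when it exists. Second, the subbialgebra $\langle H',H''\rangle$ of $B$ generated by two Hopf subalgebras is again a Hopf subalgebra: it is a quotient bialgebra of the coproduct $H'\amalg H''$ taken in $k$-BiAlg, which carries an antipode (put an antipode on the free product of the underlying algebras by extending $S',S''$ anti-multiplicatively and verify the antipode axioms), together with the fact that a quotient bialgebra of a Hopf algebra is a Hopf algebra. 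Consequently the poset of Hopf subalgebras of $B$ is directed, so $\widehat H$ is their directed union, hence a subbialgebra of $B$ on which the common extension of the individual antipodes is a well-defined antipode. With products and equalizers in hand, $k$-HopfAlg is complete by \cite[Theorem 6.10]{par}.
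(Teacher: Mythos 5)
Your equalizer construction is essentially the paper's (sum of all subcoalgebras of $H$ contained in $\{h \mid f(h)=g(h)\}$, shown to be antipode-stable), and that part is fine. For products, however, you take a genuinely different route from the paper, and it has a gap. The paper does \emph{not} pass to ``the sum of all Hopf subalgebras of $B$''; it first builds the canonical map $S:B^{op,cop}\to B$ with $\pi_i\circ S=S_i\circ\pi_i$ via the universal property, and then takes $H$ to be the sum of all \emph{subcoalgebras} $C$ of $B$ on which this fixed $S$ satisfies $S(c_{(1)})c_{(2)}=c_{(1)}S(c_{(2)})=\eta\varepsilon(c)$. The point of that choice is the universal property: given a cone $f_i:K\to H_i$ from a Hopf algebra $K$, one checks $\pi_i\circ S\circ\overline f=\pi_i\circ\overline f\circ S_K$ for all $i$, deduces $S\circ\overline f=\overline f\circ S_K$ by uniqueness in the coalgebra product, and then a one-line Sweedler computation shows the subcoalgebra $\overline f(K)$ lies in the defining family, hence in $H$. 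Nothing about $\overline f(K)$ being a Hopf subalgebra in its own right is ever needed. (The paper then even concludes, via Takeuchi's left adjoint to $U:k\text{-HopfAlg}\to k\text{-BiAlg}$, that $H=B$; so your aside that ``in general $B$ is not a Hopf algebra'' contradicts the paper's conclusion.)

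The gap in your version is the lemma you lean on twice --- once to make the poset of Hopf subalgebras directed (``a quotient bialgebra of the Hopf algebra $H'\amalg H''$ is a Hopf algebra'') and once to get $\theta(G)\subseteq\widehat H$ (``the image of a Hopf algebra under a bialgebra map is a Hopf subalgebra''). This is equivalent to the assertion that every bi-ideal $I$ of a Hopf algebra satisfies $S(I)\subseteq I$, and it is not an innocuous standard fact: the obvious attempts to prove it (e.g.\ expanding $f(S(h))=f(S(h_{(1)}))f(h_{(2)})f(S(h_{(3)}))$ and using that $\ker f$ is a coideal) are circular, and the dual statement is false --- $k[x]\subseteq k[x,x^{-1}]$ is a subbialgebra of a Hopf algebra that is not a Hopf subalgebra, which is exactly why textbooks include $S(I)\subseteq I$ in the definition of a Hopf ideal rather than deriving it. Without a proof of this lemma your $\widehat H$ is not known to be a Hopf algebra, and more importantly the factorization $\theta(G)\subseteq\widehat H$ --- the heart of the universal property --- is not established. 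You correctly identified this as ``the main obstacle,'' but the sketch offered does not surmount it; the paper's formulation of the defining family (a condition on subcoalgebras relative to one fixed $S$) is precisely what makes the obstacle disappear, and I would recommend adopting it.
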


\begin{proof}
Let $\bigl(H_{i}, m_{i}, \eta_{i}, \Delta_{i}, \varepsilon_{i},
S_{i} \bigl)_{i \in I}$ be a family of Hopf algebras and $\bigl((B
:= \prod_{i \in I}H_{i}, \Delta,$ $\varepsilon, m, \eta),
(\pi_{i})_{i \in I}\bigl)$ be the product of this family in the
category $k$-BiAlg of bialgebras. The universal property of the
product yields an unique bialgebra map $S: B^{op,cop} \rightarrow
B$ such that the following diagram commutes for all $i \in I$:
\begin{equation}\eqlabel{antipod}
\xymatrix {& B \ar[r]^{\pi_{i}}  & {B_{i}} \\
& {B^{op,cop}}\ar[r]^{\pi_{i}} \ar[u]^{S} & {B_{i}^{op,cop}}
\ar[u]_{S_{i}}}
\end{equation}
Let $H$ be the sum of all subcoalgebras $C$ of the bialgebra $B$
such that : $S(c_{(1)})c_{(2)} = c_{(1)}S(c_{(2)}) = \eta \circ
\varepsilon(c)$ for all $c \in C$. The family of subcoalgebras $C$
satisfying the above property is nonempty by the same argument
used in the proof of \thref{1}. Moreover, it is easy to see that
\begin{equation}\eqlabel{S}
S(h_{(1)})h_{(2)} = h_{(1)}S(h_{(2)}) = \eta \circ \varepsilon(h)
\end{equation}
for all $h \in H$. We will prove that $H$ is a bialgebra and it
will follow by \equref{S} that $H$ is actually a Hopf algebra with
the antipode $S_{|H}$. First note that $\eta(k) = k1_{B} \subseteq H$.\\
Let $h, g \in H$. We then have:
\begin{eqnarray*}
S\bigl((hg)_{(1)}\bigl)(hg)_{(2)} &{=}& S(h_{(1)}g_{(1)})h_{(2)}g_{(2)}\\
&{=}& S(g_{(1)})S(h_{(1)})h_{(2)}g_{(2)}\\
&{=}& S(g_{(1)}) (\eta \circ \varepsilon)(h) g_{(2)}\\
&{=}& \bigl(\eta \circ \varepsilon(h) \bigl) \bigl(\eta \circ
\varepsilon(g) \bigl) = \eta \circ \varepsilon(hg)
\end{eqnarray*}
In the same way it can be proved that
$(hg)_{(1)}S\bigl((hg)_{(2)}\bigl) = \eta \circ \varepsilon(hg)$.
Thus $hg \in H$ and $H$ is indeed a bialgebra. In order to
conclude that $S_{|H}$ is an antipode for $H$ we need to prove
that $S(H) \subseteq H$. Let $h \in H$ ; we obtain:
\begin{eqnarray*}
S\bigl(S(h)_{(1)}\bigl)S(h)_{(2)} &{=}&
S\bigl(S(h_{(2)})\bigl)S(h_{(1)})\\
&{=}& S\bigl(h_{(1)}S(h_{(2)})\bigl)\\
&{=}& S\bigl( \eta \circ \varepsilon (h)\bigl)\\
&{=}& \eta \circ \varepsilon (h) = \eta \circ \varepsilon
\bigl(S(h)\bigl)
\end{eqnarray*}
A similar computation shows that we also have
$S(h)_{(1)}S\bigl(S(h)_{(2)}\bigl) = \eta \circ \varepsilon
\bigl(S(h)\bigl)$ for all $h \in H$.
Hence $H$ is a Hopf algebra with $S_{|H}$ as antipode.\\
To end the proof we still need to show that $\bigl((H, m, \eta,
\Delta, \varepsilon, S_{|H})\bigl), (q_{i})_{i \in I}\bigl)$ is
the product of the family $\bigl(H_{i}, m_{i}, \eta_{i},
\Delta_{i}, \varepsilon_{i}, S_{i}\bigl)_{i \in I}$ in the
category $k$-HopfAlg, where $q_{i} := \pi_{i} \circ j$ for all $i
\in I$
and $j: H \rightarrow B$ is the canonical inclusion.\\
Let $K$ be a Hopf algebra with antipode $S_{K}$ and $f_{i}: K
\rightarrow H_{i}$ be a family of Hopf algebra maps for all $i \in
I$. Since $B$ is the product in $k$-BiAlg of the above family of
Hopf algebras, there exist a unique morphism of bialgebras
$\overline{f}:K \rightarrow B$ such that the following diagram
commutes:
\begin{equation}\eqlabel{f}
\xymatrix {& K\ar[d]_{\overline{f}}\ar[dr]^{f_{i}}\\
& B \ar[r]^{\pi_{i}} & H_{i}}
\end{equation}
Using the fact that $f_{i}$ is a Hopf algebra map we have :
\begin{eqnarray*}
\pi_{i} \circ S \circ \overline{f} &\stackrel{\equref{antipod}}
{=}& S_{i} \circ \pi_{i} \circ \overline{f}\\
&\stackrel{\equref{f}}{=}& S_{i} \circ f_{i}\\
&{=}& f_{i} \circ S_{K}\\
&\stackrel{\equref{f}} {=}& \pi_{i} \circ \overline{f} \circ S_{K}
\end{eqnarray*}
for all $i \in I$. By the same argument used in the proof of
theorem \thref{2} it follows that:
\begin{equation}\eqlabel{A}
S \circ \overline{f} = \overline{f} \circ S_{K}
\end{equation}
Thus, for all $k \in K$ we have:
\begin{eqnarray*}
S(\overline{f}(k)_{(1)}) \overline{f}(k)_{(2)} &{=}&
S(\overline{f}(k_{(1)})) \overline{f}(k_{(2)})\\
&\stackrel{\equref{A}}{=}& \overline{f}\bigl(S_{K}(k_{(1)})\bigl)
\overline{f}(k_{(2)})\\
&{=}& \overline{f} \bigl(S_{K}(k_{(1)}) k_{(2)}\bigl)\\
&{=}& \overline{f}(k)
\end{eqnarray*}
Hence $\overline{f}(K) \subseteq H$. Thus, we obtained an unique
Hopf algebra map $\overline{f}: K \rightarrow H$ such that $q_{i}
\circ \overline{f} = f_{i}$ for all $i \in I$. Now, since the
forgetful functor $U:$ $k$-HopfAlg $\rightarrow$ $k$-BiAlg has a
left adjoint (see \cite{T}) it follows that, in particular, $U$
preserves products. That is, $H = B$ and the map $S$ obtained in
\equref{antipod} is actually an antipod for $B$. Thus, $\bigl((B,
m, \eta, \Delta, \varepsilon, S)\bigl), (\pi_{i})_{i \in I}\bigl)$
is the product of the family $\bigl(H_{i}, m_{i}, \eta_{i},
\Delta_{i}, \varepsilon_{i}, S_{i}\bigl)_{i \in I}$ in the
category $k$-HopfAlg.\\
Now let $f$, $g$ :$H \rightarrow K$ be two Hopf algebra morphisms
and $S: = \{h \in H | f(h) = g(h)\}$, which is just a $k$-subspace
of $H$. Let $D$ be the sum of all subcoalgebras of $H$ contained
in $S$. Again, the family of subcoalgebras of $H$ included in $S$
is not empty by the same argument used in \thref{1}. A simple
computation shows that $D$ is in fact a Hopf subalgebra of $H$.
Moreover, $(D, i)$ is the equalizer in the category $k$-HopfAlg of
the pair $(f,g)$ where $i: D \rightarrow H$ is the canonical
inclusion.
\end{proof}

\begin{remark}
Let $J$ be a small category, $F: J \rightarrow$$k$-HopfAlg be a
functor, $\bigl(\Pi_{j \in J} F(j),$ $(p_{j})_{j \in J} \bigl)$,
$\bigl(\Pi_{u \in Hom(J)} F(cod (u)), (p_{u})_{u \in Hom(J)}
\bigl)$ be the product in $k$-HopfAlg of the families
$\bigl(F(j)\bigl)_{j \in J}$, respectively $\bigl(F(cod
(u))\bigl)_{u \in Hom(J)}$ and $f, g : \Pi_{j \in J} F(j)
\rightarrow \Pi_{u \in Hom(J)} F(cod (u))$ be the unique Hopf
algebra maps such that $p_{u} \circ f = p_{cod (u)}$ and $p_{u}
\circ g = F(u) \circ p_{dom (u)}$ for all $u \in Hom(J)$. We
define
$$D = \{ x \in \Pi_{j \in J} F(j) | x_{(1)} \ot f (x_{(2)}) \ot
x_{(3)} = x_{(1)} \ot g (x_{(2)}) \ot x_{(3)} \}.$$ Then the pair
$\bigl(D, (\varphi_{j} = p_{j} \circ e)_{j \in J}\bigl)$ is the
limit of the functor $F$, where $e: D \rightarrow \Pi_{j \in J}
F(j)$ is the canonical inclusion.
\end{remark}

The key role in the construction of the product in the category of
coalgebras was played by the fact that the forgetful functor from
the category of coalgebras to the category of vector spaces has a
right adjoint. It is therefore natural to ask if the conclusion
remains true for the category of $R$-corings (\cite{BW}). Let $R$
be a ring, $R$-Corings be the category of $R$-corings and
${}_R{\mathcal M}_R$ be the category of $R$-bimodules.

\textbf{Problem:} \textit{Does there exist a right adjoint for the
forgetful functor $F : R - Corings \to {}_R{\mathcal M}_R$}?

\section*{Acknowledgement}

The author wishes to thank Professor Gigel Militaru, who suggested
the problems studied here, for his great support and for the
useful comments from which this manuscript has benefitted.

\end{document}